\documentclass[11pt,reqno]{amsart}
\usepackage{longtable}
\usepackage{hyperref}
\usepackage[T1]{fontenc}
\usepackage[utf8]{inputenc}
\usepackage[english]{babel}
\usepackage{textcomp}
\usepackage{dsfont}
\usepackage{latexsym}
\usepackage{amssymb}
\usepackage{amsthm}
\usepackage{amsmath}
\DeclareMathAlphabet{\mathpzc}{OT1}{pzc}{m}{en}
\usepackage{yfonts}
\usepackage{xfrac}
\usepackage{newlfont}
\usepackage{graphicx}
\usepackage{mathtools}
\usepackage{comment}
\usepackage{indentfirst}
\usepackage{braket}
\usepackage{mathrsfs}
\usepackage{fixmath}

\usepackage{etoolbox}
\apptocmd{\lim}{\limits}{}{}
\apptocmd{\sup}{\limits}{}{}
\apptocmd{\inf}{\limits}{}{}
\apptocmd{\liminf}{\limits}{}{}
\apptocmd{\limsup}{\limits}{}{}
\pretocmd{\langle}{\left}{}{}
\pretocmd{\rangle}{\right}{}{}

\textwidth16.5cm
\textheight21cm
\evensidemargin.2cm
\oddsidemargin.2cm

\addtolength{\headheight}{3.2pt}    

\usepackage{scalerel}[2014/03/10]
\usepackage[usestackEOL]{stackengine}
\newcommand{\dashint}{\,\ThisStyle{\ensurestackMath{%
			\stackinset{c}{.2\LMpt}{c}{.5\LMpt}{\SavedStyle-}{\SavedStyle\phantom{\int}}}%
		\setbox0=\hbox{$\SavedStyle\int\,$}\kern-\wd0}\int}

\DeclareMathOperator{\supp}{Supp}

\newcommand{\dd}{\mathrm{d}}

\DeclarePairedDelimiter{\abs}{\lvert}{\rvert}
\DeclarePairedDelimiter{\norm}{\lVert}{\rVert}

\let\originalleft\left
\let\originalright\right
\renewcommand{\left}{\mathopen{}\mathclose\bgroup\originalleft}
\renewcommand{\right}{\aftergroup\egroup\originalright}

\newcommand{\N}{\mathds{N}}

\newcommand{\R}{\mathds{R}}

\newcommand{\Mf}{\mathfrak{M}}

\newcommand{\Gc}{\mathcal{G}}
\newcommand{\Hc}{\mathcal{H}}

\newcommand{\meg}{\leqslant}
\newcommand{\Meg}{\geqslant}
\newcommand{\eps}{\varepsilon}
\renewcommand{\phi}{\varphi}
\newcommand{\mi}{\mu}

\begin{document}

\theoremstyle{definition}
\newtheorem{deff}{Definition}

\newtheorem{oss}[deff]{Remark}

\theoremstyle{plain}
\newtheorem{teo}[deff]{Theorem}

\newtheorem{lem}[deff]{Lemma}

\newtheorem{prop}[deff]{Proposition}

\newtheorem{cor}[deff]{Corollary}

\author[M. Calzi]{Mattia Calzi}

\title{An Integral Version of   Hardy's Inequality}

\address{Dipartimento di Matematica, Universit\`a degli Studi di
	Milano, Via C. Saldini 50, 20133 Milano, Italy}
\email{{\tt mattia.calzi@unimi.it}}

\keywords{Hardy's inequality, conjugate Hardy's inequality, weights}
\thanks{{\em Math Subject Classification 2020} }
\thanks{The author is a member of the 	Gruppo Nazionale per l'Analisi Matematica, la Probabilit\`a e le	loro Applicazioni (GNAMPA) of the Istituto Nazionale di Alta	Matematica (INdAM). The author was partially funded by the INdAM-GNAMPA Project CUP\_E55F22000270001.
} 

\begin{abstract}
	In this note we present a version of Hardy's inequality on a measure space $(X,\mi)$ endowed with  a measurable function $N\colon X\to \R$ which replaces the absolute value on $\R$ or $\R^n$, and, more generally, the distance function from a given point when $X$ is a metric space.
\end{abstract}
\maketitle

\section{Introduction}

In~\cite{Hardy2,Landau}, Hardy's inequality was first proved in its discrete version 
\[
\sum_{n=1}^\infty \left( \frac 1 n \sum_{k=1}^n a_k \right)^p\meg \left( \frac{p}{p-1} \right)^p \sum_{n=1}^\infty a_n^p
\]
for every $p\in (1,\infty)$ and for every positive sequence $(a_n)$. The continuous version
\[
\int_0^{+\infty} \left(\frac 1 x \int_0^x f(t)\,\dd t  \right)^p\,\dd x\meg  \left( \frac{p}{p-1} \right)^p  \int_0^{+\infty} f(x)^p\,\dd x,
\]
for every positive measurable function $f$ on $[0,\infty)$ and for every $p>1$, was later proved in~\cite{Hardy1}. Since then, Hardy's inequality has been generalized and extended to several contexts. On the one hand, the function $1/x$ has been replaced with more general weights in a series of works (cf., e.g.,~\cite{Talenti,Tomaselli,Muckenhoupt}), finally reaching a characterization of the positive Radon measures $\mi,\nu$ for which  there is a $C\Meg 0$ such that
\begin{equation}\label{eq:2}
\int_0^{+\infty} \left( \int_0^x f(t)\,\dd t  \right)^p\,\dd \mi( x)\meg C^p \int_0^{+\infty} f(x)^p\,\dd \nu(x)
\end{equation}
for every positive Borel measurable function $f$ on $\R$. Explicitly, there is a finite constant $C\Meg 0$ as above if and only if
\[
B\coloneqq \sup_{r>0} \mi([r,+\infty))^{1/p}\left( \int_0^r \psi^{-p'/p}(x)\,\dd x\right)^{1/p'}<\infty
\]
(interpreting $\big( \int_0^r \psi^{-p'/p}(x)\,\dd x\big)^{1/p'}$ as $\norm{\chi_{(0,r)}\psi^{-1/p}}_{L^{p'}(\R)}$ when $p=1$), where $\psi$ is the density of the absolutely continuous part of $\nu$ with respect to Lebesgue measure, in which case the minimal constant $C$ such that~\eqref{eq:2} holds satisfies $B\meg C\meg (p')^{1/p'} p^{1/p} B$, with the conventions $0^0=0$ and $\infty^0=1$. A similar characterization was also provided for $p=\infty$, with suitable modifications. 

On the other hand, $\R$ has been replaced with more general spaces, from $\R^n$, replacing the segments $[0,r]$ with the balls $B(0,r)$, to general metric spaces, with similar substitutions, cf.~\cite{RV,RV2}. These latter papers provide a very general version of Hardy's inequality on metric spaces based on a suitable `polar decomposition'. On the contrary, we shall use a duality method in order to essentially extend the results of~\cite{Muckenhoupt}. For more detailed accounts on the available literature on Hardy's inequalities (including several equivalent formulations of the finiteness of $B$, in the above notation), cf., e.g.,~\cite{AM,OK,Davies,PS,GKPW,EE,GM,GKP,Mazya,BEL,KPS,RV,RV2} and the references therein.

In this note, we shall prove the following general version of Hardy's inequality.

\begin{teo}\label{teo:1}
	Let $(X,\Mf)$ be a measurable space, let $\mi_2$ be a positive Borel measure on $\R$, and let $\mi_1,\mi_3$ be two  positive measures on $(X,\Mf)$, with $\mi_1$ semi-finite. Take $p,q\in [1,\infty]$ with $p\meg q$, and assume that the absolutely continuous part of $\mi_3$ with respect to $\mi_1$ has a density $\psi\colon X\to [0,+\infty)$ with respecto to $\mi_1$.\footnote{This is certainly the case if $\mi_1$ and $\mi_3$ are both Radon or $\sigma$-finite. One may simply require $\mi_1$ to be Radon or $\sigma$-finite if one allows $\psi$ to take also the value $+\infty$, but in this case the assertion is still equivalent to Theorem~\ref{teo:1}, applied to $X\setminus \psi^{-1}(+\infty)$.} In addition, let $N\colon X\to \R$ be a $\Mf$-measurable function. Then, there is a constant $C>0$ such that
	\begin{equation}\label{eq:1}
		\left( \int_\R \left( \int_{N(x)<r} f(x)\,\dd \mi_1(x)\right) ^q \,\dd \mi_2(r)\right)^{1/q}\meg C  \left( \int_X f(x)^p\,\dd \mi_3(x)\right)^{1/p}
	\end{equation}
	(with the usual modifications when $p$ or $q$ is $\infty$), for every positive $\Mf$-measurable function $f$ on $X$,
	if and only if
	\[
	B\coloneqq \sup_{r\in\R}\mi_2([r,+\infty))^{1/q} \left(\int_{N(x)<r} \psi^{-p'/p}\,\dd \mi_1\right)^{1/p'}
	\]
	(with the usual modification when $p=1$ and the conventions $0^0=0$ and $\infty^0=1$) is finite.
	The minimal $C$ for which~\eqref{eq:1} holds satisfies the inequalities $B\meg C\meg \min((p')^{1/p'} p^{1/q}, (q')^{1/p'} q^{1/q} )B$.
\end{teo}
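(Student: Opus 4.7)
The plan is to use duality to reduce \eqref{eq:1} to a one-dimensional weighted Hardy inequality on $\R$, and then apply the generalized Muckenhoupt--Bradley characterization alluded to in the introduction. I would treat the generic case $1<p\meg q<\infty$ explicitly; the endpoints $p=1$, $p=\infty$, $q=\infty$ require only routine modifications (H\"older becomes an essential supremum, the duality pairing is applied directly, etc.).

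For the necessity of $B\meg C$, I would test \eqref{eq:1} against the family $f_{r_0}:=\chi_{\{N<r_0\}}\psi^{-p'/p}$, $r_0\in\R$. The identity $p'-1=p'/p$, together with the estimate $\int f_{r_0}^p\,\dd\mi_3\Meg\int f_{r_0}^p\psi\,\dd\mi_1$, reduces \eqref{eq:1} applied to $f_{r_0}$ to
\[
\mi_2([r_0,+\infty))^{1/q}\left(\int_{N(x)<r_0}\psi^{-p'/p}\,\dd\mi_1\right)^{1/p'}\meg C;
\]
the semi-finiteness of $\mi_1$ allows the standard truncation when the integral on the left is infinite, and the conventions $0^0=0$ and $\infty^0=1$ handle the degenerate cases.

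For the sufficiency, assume $B<\infty$. Since $\int f^p\,\dd\mi_3\Meg\int f^p\psi\,\dd\mi_1$, it is enough to establish \eqref{eq:1} with $\mi_3$ replaced by the absolutely continuous part $\psi\mi_1$. By $L^{q'}$-duality I would write
\[
\left(\int_\R(Tf)^q\,\dd\mi_2\right)^{1/q}=\sup_h\int_\R h(r)\,(Tf)(r)\,\dd\mi_2(r),
\]
where $(Tf)(r):=\int_{N(x)<r}f\,\dd\mi_1$ and the supremum runs over $h\Meg0$ with $\|h\|_{L^{q'}(\mi_2)}\meg1$. Fubini's theorem turns the right-hand side into $\int_Xf(x)H(N(x))\,\dd\mi_1(x)$ with $H(s):=\int_{[s,+\infty)}h\,\dd\mi_2$, and H\"older's inequality with exponents $(p,p')$ and the weight $\psi^{1/p}$ then bounds this by $\|f\|_{L^p(\psi\mi_1)}\|H\|_{L^{p'}(\nu)}$, where $\nu:=N_*(\psi^{-p'/p}\mi_1)$ is the pushforward measure on $\R$. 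One is thus reduced to the one-dimensional conjugate Hardy inequality $\|H\|_{L^{p'}(\nu)}\meg C'\|h\|_{L^{q'}(\mi_2)}$, which under the reflection $r\mapsto-r$ becomes the standard weighted Hardy inequality on $\R$ with exponents $(q',p')$, $q'\meg p'$. The Muckenhoupt--Bradley characterization then yields $C'<\infty$ iff $B<\infty$, together with an explicit bound on $C'$ in terms of $B$. The two numbers $(p')^{1/p'}p^{1/q}$ and $(q')^{1/p'}q^{1/q}$ inside the minimum should come respectively from applying the 1D upper bound directly to \eqref{eq:1} (without invoking duality) and from the dual route just sketched; since $p\meg q$, neither dominates.

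I expect the main difficulty to lie in the measure-theoretic bookkeeping rather than in the analytic core: the Fubini and duality manipulations need to be justified under the weak hypothesis that $\mi_1$ is only semi-finite, typically by reducing to $\sigma$-finite subsets on which $f$ and $\psi$ do not vanish; once the reduction to the real line is in place, the analytic content is essentially a citation of the one-dimensional weighted Hardy theory.
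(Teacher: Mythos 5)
Your architecture --- duality to push the problem down to a one-dimensional inequality on $\R$, plus testing with $\chi\,\psi^{-p'/p}$ and semi-finiteness for the necessity --- matches the paper's Steps V and VII. But there is a genuine gap at the analytic core. The one-dimensional inequality you land on after Fubini and H\"older, namely $\|H\|_{L^{p'}(\nu)}\meg C'\|h\|_{L^{q'}(\mi_2)}$ with $H(s)=\int h\,\dd\mi_2$ over a half-line, has the \emph{same arbitrary Borel measure} $\mi_2$ (possibly atomic, possibly unbounded) appearing both inside the primitive and in the right-hand norm, and an arbitrary image measure $\nu=N_*(\psi^{-p'/p}\mi_1)$ outside. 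This is not the Muckenhoupt--Bradley theorem quoted in the introduction, which characterizes the inequality when the primitive is taken against Lebesgue measure and only the right-hand measure is general; what you are citing is essentially the theorem being proved, restricted to $X=\R$. The paper spends most of its effort (Steps I--IV) establishing exactly this general-measure one-dimensional case: first for diffuse compactly supported measures via a chain rule for BV functions (Lemma~\ref{lem:1}, which substitutes for the explicit antiderivative computations in the classical proof), then by two separate approximation arguments to absorb the atoms of $\mi_1$ and of $\mi_2$. Without supplying that argument (or a reference that genuinely covers arbitrary Borel measures with these constants), your proof is circular.

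Two further points would need repair even granting the 1D result. First, Fubini over $\{(x,r):N(x)<r\}$ produces $\int_{(N(x),+\infty)}h\,\dd\mi_2$, an \emph{open} half-line, not $H(N(x))=\int_{[N(x),+\infty)}h\,\dd\mi_2$ as you wrote; with the closed half-line the Muckenhoupt constant of the dual problem is $B'=\sup_r\mi_2([r,+\infty))^{1/q}\big(\int_{N\meg r}\psi^{-p'/p}\,\dd\mi_1\big)^{1/p'}$, which the paper shows can be strictly larger than $B$, so as written you would only prove sufficiency of $B'<\infty$. With the open half-line one still has to invoke the identity~\eqref{eq:3} equating $\sup_r\mi_2((r,+\infty))^{1/q}\big(\int_{N\meg r}\cdots\big)^{1/p'}$ with $B$ to recover the stated constant; the paper does this explicitly in Step II. Second, in the necessity part the estimate $\int f_{r_0}^p\,\dd\mi_3\Meg\int f_{r_0}^p\psi\,\dd\mi_1$ points the wrong way: from~\eqref{eq:1} you get an upper bound by $C\big(\int f^p\,\dd\mi_3\big)^{1/p}$ and you need to \emph{decrease} the right-hand side to $\big(\int f^p\psi\,\dd\mi_1\big)^{1/p}$, which requires choosing the test function to vanish on a $\mi_1$-null carrier of the singular part of $\mi_3$ (as in the opening of the paper's Step VII), not the inequality you state.
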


Some comments are in order. First of all, we say that a  measure $\mi$ on $(X,\Mf)$  is semi-finite if every $\Mf$-measurable subset $E$ of $X$ with $\mi(E)=+\infty$ contains a $\Mf$-measurable subset $F$ with $0<\mi(F)<+\infty$. This is readily seen to be equivalent to the fact that $\mi(E)$ is the least upper bound of the $\mi(F)$, as $F$ runs through the set of $\Mf$-measurable subsets of $E$ with finite measure, for every $\Mf$-measurable subset $E$ of $X$. Thus, $\mi$ is semi-finite if it satisfies some kind of inner regularity with respect to $\Mf$-measurable sets with finite measure. In particular, bounded and $\sigma$-finite measures are semi-finite.

Furthermore, given a positive $\Mf$-measurable function $\psi$, we denote by $\psi\cdot \mi$ the measure such that $(\psi\cdot \mi)(E)=\int_E \psi\,\dd \mi$ for every $\Mf$-measurable subset $E$ of $X$, in which case we say that $\psi$ is a density of $\psi\cdot \mi$ with respect to $\mi$. We remark that, when we say `positive', we mean `$\Meg0$', so that a positive function is a function which is $\Meg 0$ everywhere.

Let us now briefly comment on the assumptions of Theorem~\ref{teo:1}.
Observe first that, if $\mi_1$ is not assumed to be semi-finite, then the inequality $B\meg C$ may fail (and the existence of $C$ may fail to imply the finiteness of $B$). For example, if $\mi_1(E)=+\infty$ for every non-empty element of $\Mf$, and if $\mi_3=\mi_1$ (so that one may take $\psi=1$), then~\eqref{eq:1} holds trivially for every positive $\Mf$-measurable  function $f$ on $X$ with any non-zero $C$, but $B$ is infinite unless $\mi_2=0$ on $(\inf N(X),+\infty) $.

Furthermore, notice that
\begin{equation}\label{eq:3}
 \sup_{r\in\R}\mi_2([r,+\infty))^{1/q} \left(\int_{N(x)<r} \psi^{-p'/p}\,\dd \mi_1\right)^{1/p'}=  \sup_{r\in\R}\mi_2((r,+\infty))^{1/q} \left(\int_{N(x)\meg r} \psi^{-p'/p}\,\dd \mi_1\right)^{1/p'}
\end{equation}
since the two sides are the least upper bound of a left-continuous and a right-continuous function which coincide  on the complement of a countable set (if $q<\infty$ and $p>1$; the other cases require a slightly different treatment). On the other hand,
\[
B'\coloneqq  \sup_{r\in\R}\mi_2([r,+\infty))^{1/q} \left(\int_{N(x)\meg r} \psi^{-p'/p}\,\dd \mi_1\right)^{1/p'}
\]
may be strictly larger than $B$ (and is crucial in Theorem~\ref{teo:1bis}), while
\[
\sup_{r\in\R}\mi_2((r,+\infty))^{1/q} \left(\int_{N(x)< r} \psi^{-p'/p}\,\dd \mi_1\right)^{1/p'}
\]
may be strictly smaller than $B$.

Finally, observe that, if there is a  measure $\nu$ on $(X,\Mf)$ such that   $N_*(\nu)=\mi_2$, that is, $\mi_2(A)=\nu(N^{-1}(A))$ for every Borel subset $A$ of $\R$, then~\eqref{eq:1} may be written in the equivalent way
\[
\left( \int_X \left( \int_{N(x)<N(y)} f(x)\,\dd \mi_1(x)\right) ^q \,\dd \nu(y)\right)^{1/q}\meg C  \left( \int_X f(x)^p\,\dd \mi_3(x)\right)^{1/p}
\]
for every positive $\Mf$-measurable function $f$ on $X$, since the function $r \mapsto \int_{N(x)<r} f(x)\,\dd \mi_1(x)$ is increasing, hence Borel measurable. Nonetheless, even if $N(X)=\R$, there may be positive Borel measures on $\R$ which are not images of a positive  measure on $(X,\Mf)$\footnote{For example, if $X=\R$ and $\Mf$ is the set of subsets of $X$, then   $N_*(\nu)$ is the restriction of $\nu$ to  the Borel $\sigma$-algebra of $\R$ and it seems unlikely that every Borel measure  $\mi_2$ on $\R$ may be written in the form $N_*(\nu)$ for some  measure $\nu$ on $(X,\Mf)$. Under the continuum hypothesis, $\R$ is ulamian, so that every $\nu$ as above must be atomic, that is, $\nu=\sum_{x\in \R} f(x) \delta_x$ for some $f\colon X\to [0,+\infty]$, so that only the atomic measures on $\R$ may be of the form $N_*(\nu)$.}

Let us now compare Theorem~\ref{teo:1} with some results   in the literature.
First of all, the discrete version of Hardy's inequality may be obtained from Theorem~\ref{teo:1} choosing $X=1+\N=\Set{1,2,3,\dots}$, $\mi_1=\mi_3$ as the counting measure on $X$, $\mi_2=\sum_{n=1}^\infty \frac{1}{n^p} \delta_{n+1}$, and letting $N$ be the natural inclusion of $X$ in $\R$ (nonetheless, the discrete version of Hardy's inequality is most naturally obtained as a particular case of Theorem~\ref{teo:1bis} below). Analogously, the continuous version of Hardy's inequaliy may be  obtained choosing $X=(0,\infty)$, $\mi_1=\mi_3$ as Lebesgue measure, $\mi_2=\chi_{(0,+\infty)}\frac{1}{(\,\cdot\,)^p}\cdot \Hc^1$, where $\Hc^1$ denotes the $1$-dimensional Hausdorff measure, that is, Lebesgue measure, and letting $N$ be the natural inclusion  of $X$ in $\R$. The general version~\eqref{eq:2} of Hardy's inequality on $[0,\infty)$ may be obtained in a similar way, as well as several other versions of Hardy's inequality on $\R^n$ on in more general metric spaces (including~\cite{RV}). 

Furthermore, other interesting examples may be studied by means of Theorem~\ref{teo:1}. For instance, Theorem~\ref{teo:1} may be applied to every Riemannian manifold (complete or not), endowed with the canonical volume form, and even to every sub-Riemannian manifold, endowed with a suitable  measure (such as Popp's measure, when defined). Once the behaviour of the volume of the balls centred at a fixed point near $0$ and near $\infty$ is understood, one may find estimates for $B$ and then sufficient conditions for~\eqref{eq:1} to hold with a certain $C$.
Besides that, Thereom~\ref{teo:1} applies also to discrete spaces, as remarked earlier, even though Theorem~\ref{teo:1bis} below applies more naturally, in this latter context.

In addition, the so-called conjugate (or dual) Hardy's inequality  (which we now express on $[0,\infty)$ for the sake of simplicity)
\[
\int_0^{+\infty} \left( \int_x^{+\infty} f(t)\,\dd t  \right)^p\,\dd \mi( x)\meg C^p \int_0^{+\infty} f(x)^p\,\dd \nu(x)
\]
may be obtained as a particular case of~\eqref{eq:1} choosing $N(x)=-x$.

Notice that we do \emph{not} consider the cases in which $\min(p,q)<1$ or $p<q$ (cf., e.g.,~\cite{OK,PS,RV2}). Indeed, the former cannot be studied with duality methods; the latter is not considered for the sake of simplicity.

Concerning the vast wealth of equivalent formulations of the finiteness of $B$ (cf., e.g.,~\cite{GKPW,GKP}), we observe that, at least when $\mi_2$ and $N_*(\mi_1)$ are $\sigma$-finite and absolutely continuous  with respect to Lebesgue measure, no new proofs are necessary, as these equivalent conditions are formulated in terms of the left-continuous monotone functions $r\mapsto\mi_2([r,+\infty))$ and $r\mapsto \int_{N(x)<r} \psi^{-p'/p}(x)\,\dd \mi_1(x)$ and their derivatives $-\mi_2$ and $N_*(\psi^{-p'/p}\cdot \mi_1)$ (for $p>1$). For more general measures, suitable extensions may be in order. Nonetheless, once these equivalences are formulated for general pairs of left-continuous decresing and increasing functions, they immediately apply to this more general context.

We shall also prove the following variant of Theorem~\ref{teo:1}.

\begin{teo}\label{teo:1bis}
	Keep the hypotheses and the notation of Theorem~\ref{teo:1}. Then, there is a constant $C'>0$ such that
	\begin{equation}\label{eq:1bis}
		\left( \int_\R \left( \int_{N(x)\meg r} f(x)\,\dd \mi_1(x)\right) ^q \,\dd \mi_2(r)\right)^{1/q}\meg C  \left( \int_X f(x)^p\,\dd \mi_3(x)\right)^{1/p}
	\end{equation}
	(with the usual modifications when $p$ or $q$ is $\infty$), for every positive $\Mf$-measurable function $f$ on $X$,
	if and only if
	\[
	B'\coloneqq \sup_{r\in\R}\mi_2([r,+\infty))^{1/q} \left(\int_{N(x)\meg r} \psi^{-p'/p}\,\dd \mi_1\right)^{1/p'}
	\]
	(with the usual modification when $p=1$) is finite.
	The minimal $C'$ for which~\eqref{eq:1bis} holds satisfies the inequalities $B'\meg C'\meg \min((p')^{1/p'} p^{1/q}, (q')^{1/p'} q^{1/q} )B'$.
\end{teo}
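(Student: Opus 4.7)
The argument follows the template of Theorem~\ref{teo:1} in two parts.

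\emph{Necessity.} To see $B'\leq C'$, I would test \eqref{eq:1bis} with functions $f=\chi_E\psi^{-p'/p}$, where $E\subseteq \{N\leq r_0\}$ is a $\Mf$-measurable set on which $\psi$ is bounded away from $0$ and which is disjoint from the singular part of $\mi_3$ with respect to $\mi_1$. Then for every $r\geq r_0$ the inner integral on the left equals the constant $\int_E\psi^{-p'/p}\,\dd\mi_1$, making the left-hand side at least $\mi_2([r_0,+\infty))^{1/q}\int_E\psi^{-p'/p}\,\dd\mi_1$, while the right-hand side reduces to $C'(\int_E\psi^{-p'/p}\,\dd\mi_1)^{1/p}$. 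Rearranging and using the semi-finiteness of $\mi_1$ to let $E$ saturate $\{N\leq r_0\}$ (exactly as in the corresponding direction for Theorem~\ref{teo:1}) yields $\mi_2([r_0,+\infty))^{1/q}(\int_{N\leq r_0}\psi^{-p'/p}\,\dd\mi_1)^{1/p'}\leq C'$, and taking the supremum over $r_0$ gives $B'\leq C'$.

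\emph{Sufficiency.} I would reduce to Theorem~\ref{teo:1} by a perturbation of $N$. For $n\in\N$ with $n\geq 1$, set $N_n\coloneqq N-1/n$; applying Theorem~\ref{teo:1} to the datum $(X,\Mf,\mi_1,\mi_2,\mi_3,N_n)$ produces
\[
\left(\int_\R\left(\int_{N(x)<r+1/n}f\,\dd\mi_1\right)^q\,\dd\mi_2(r)\right)^{1/q}\leq \min\left((p')^{1/p'}p^{1/q},(q')^{1/p'}q^{1/q}\right)B_n\|f\|_{L^p(\mi_3)},
\]
where $B_n\coloneqq \sup_{r\in\R}\mi_2([r,+\infty))^{1/q}(\int_{N<r+1/n}\psi^{-p'/p}\,\dd\mi_1)^{1/p'}$. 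Since $\{N<r+1/n\}\downarrow\{N\leq r\}$ as $n\to\infty$, the inner integrals decrease monotonically to $\int_{N\leq r}f\,\dd\mi_1$; dominated convergence (with the $n=1$ term as integrable majorant whenever the right-hand side is finite) then turns the perturbed estimate into \eqref{eq:1bis} in the limit, with constant at most $\min((p')^{1/p'}p^{1/q},(q')^{1/p'}q^{1/q})\liminf_n B_n$.

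The remaining and main obstacle is to verify $\lim_n B_n=B'$. Monotonicity of $r\mapsto \int_{N<r}\psi^{-p'/p}\,\dd\mi_1$ gives $B_n\geq B'$ for each $n$, and $B_n$ is itself decreasing, so $B_n\downarrow L\geq B'$. For the reverse inequality $L\leq B'$ I would proceed by contradiction: suppose $L>B'+\eta$; extract near-maximizers $r_n$ of $B_n$, and split into the cases $r_n\to r^*\in\R$, $r_n\to+\infty$, and $r_n\to-\infty$. Using the pointwise limit $\int_{N<r_n+1/n}\psi^{-p'/p}\,\dd\mi_1\to \int_{N\leq r^*}\psi^{-p'/p}\,\dd\mi_1$ (combined with monotonicity to handle the joint movement of $r_n$ and $1/n$), left-continuity and the limits at $\pm\infty$ of $r\mapsto \mi_2([r,+\infty))$, and the definition of $B'$, one produces a finite $r^*$ with $\mi_2([r^*,+\infty))^{1/q}(\int_{N\leq r^*}\psi^{-p'/p}\,\dd\mi_1)^{1/p'}\geq B'+\eta/2$, contradicting the definition of $B'$. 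The usual modifications handle $p=1$ or $q\in\{1,\infty\}$.
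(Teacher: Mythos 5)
Your necessity half ($B'\meg C'$) is correct and is essentially the paper's own argument (test functions $\chi_E\psi^{-p'/p}$ supported in $\{N\meg r_0\}$, then saturation of $\{N\meg r_0\}$ using the semi-finiteness of $\psi^{-p'/p}\cdot\mi_1$). The sufficiency half, however, hinges on the claim $\lim_n B_n=B'$, which you rightly isolate as the main obstacle but which is \emph{false} in general, so the reduction to Theorem~\ref{teo:1} via $N_n=N-1/n$ does not go through. Concretely, take $X=\R$, $N=\mathrm{id}$, $\psi\equiv1$, $\mi_3=\mi_1$, let $\mi_1$ be the measure on $[0,+\infty)$ with $\mi_1((-\infty,t])=e^{t^2}$ for $t\Meg0$ (a unit mass at $0$ plus the density $2te^{t^2}$ on $(0,+\infty)$), and let $\mi_2$ be the probability measure on $(0,+\infty)$ with $\mi_2([t,+\infty))=e^{-qt^2/p'}$ for $t\Meg0$. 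Then
\[
B'=B=\sup_{t\Meg 0}\,e^{-t^2/p'}e^{t^2/p'}=1,
\]
whereas for every $n$
\[
B_n\Meg\sup_{t>0}\,\mi_2([t,+\infty))^{1/q}\,\mi_1((-\infty,t+1/n))^{1/p'}=\sup_{t>0}\,e^{(2t/n+1/n^2)/p'}=+\infty.
\]
Thus every $B_n$ is infinite although $B'$ is finite, and your contradiction argument cannot produce a finite $r^*$: the near-maximizers $r_n$ escape to $+\infty$, where $\mi_2([r,+\infty))\to0$ and $\int_{N<r+1/n}\psi^{-p'/p}\,\dd\mi_1\to+\infty$ conspire to keep the product large for every fixed shift.

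This failure at infinity is precisely why Lemma~\ref{lem:2} of the paper carries the hypothesis that $fg$ be compactly supported (it is what confines the near-maximizers to a fixed compact interval), and why the paper performs the shift argument only \emph{after} reducing, via the analogues of Steps V and VI of the proof of Theorem~\ref{teo:1}, to bounded, compactly supported measures on $\R$. In that truncated setting your scheme would work: there one does have $B_n\to B'$ by Lemma~\ref{lem:2}, since $B_n\meg\sup_s\mi_2([s-1/n,+\infty))^{1/q}\bigl(\int_{N\meg s}\psi^{-p'/p}\,\dd\mi_1\bigr)^{1/p'}$ and the two factors are respectively left- and right-continuous with compactly supported product. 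To repair the proof you must therefore insert the truncation/duality reduction of Steps V--VI before perturbing $N$ (or, as the paper does, perturb only in the compactly supported model case); as written, the sufficiency direction is not established.
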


\section{Proof of Theorems~\ref{teo:1} and~\ref{teo:1bis}}

We first need to recall a simple lemma (cf.~\cite[Theorem 3.96]{AFP} for a proof of a much more general version of this result). We restrict our attention to diffuse measures (that is, measures $\mi$ such that $\mi(\Set{x})=0$ for every $x$) since this additional assumption simplifies the statement and   this case will be sufficient for our purposes.

\begin{lem}\label{lem:1}
	Let $f$ be a continuous function on an open subset $I$ of $\R$, and assume that its distributional derivative $f'$ is a (Radon) measure.  Let $J$ be an open subset of $\R$ which contains $f(I)$, and let $\phi$ be a function of class $C^1$ on $J$. Then,
	\[
	(\phi\circ f)'=(\phi'\circ f)\cdot f'. 
	\]
\end{lem}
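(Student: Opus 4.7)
The plan is to prove this chain rule by mollification. First I would localize: since the claim is about equality of two Radon measures on $I$, it suffices to verify it on each relatively compact open subinterval $I'\Subset I$. On a slightly larger interval I fix a mollifier $\rho_\eps$ and set $f_\eps\coloneqq f\ast \rho_\eps$, which for $\eps$ small enough is smooth on a neighborhood of $\overline{I'}$. Since $f$ is continuous, $f_\eps\to f$ uniformly on $\overline{I'}$, and the compact set $f(\overline{I'})\subset J$ is contained in a relatively compact open set $J'\Subset J$ which eventually contains all $f_\eps(\overline{I'})$.

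Next, I would invoke the classical pointwise chain rule for the smooth functions $f_\eps$, which gives
\[
(\phi\circ f_\eps)'=(\phi'\circ f_\eps)\cdot f_\eps'
\]
as continuous densities on a neighborhood of $\overline{I'}$, a fortiori as Radon measures. The goal is to pass to the limit $\eps\to 0^+$ on both sides in the sense of distributions (equivalently, vague convergence of Radon measures on $I'$).

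For the left-hand side, $\phi\circ f_\eps\to \phi\circ f$ uniformly on $\overline{I'}$ by uniform continuity of $\phi$ on $\overline{J'}$, hence $(\phi\circ f_\eps)'\to(\phi\circ f)'$ in $\Dc'(I')$. For the right-hand side I would use two facts: $f_\eps'=f'\ast\rho_\eps\to f'$ vaguely on $I'$, a standard property of mollification of Radon measures; and $\phi'\circ f_\eps\to \phi'\circ f$ uniformly on $\overline{I'}$ by uniform continuity of $\phi'$ on $\overline{J'}$. Then for any test function $\chi\in C_c(I')$ one writes
\[
\int \chi\,(\phi'\circ f_\eps)\,\dd f_\eps'=\int \chi\,(\phi'\circ f)\,\dd f_\eps'+\int \chi\,(\phi'\circ f_\eps-\phi'\circ f)\,\dd f_\eps'.
\]
The first term converges to $\int \chi(\phi'\circ f)\,\dd f'$ by vague convergence, while the second term is bounded by $\norm{\chi}_\infty\,\norm{\phi'\circ f_\eps-\phi'\circ f}_\infty\,\abs{f_\eps'}(\supp\chi)$ and hence vanishes, since the total variations $\abs{f_\eps'}(\supp\chi)$ stay bounded (being controlled by $\abs{f'}$ on a slight enlargement of $\supp\chi$).

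The main obstacle is making this last passage to the limit rigorous: one must know that the mollified total variations remain locally bounded, and that the product of a uniformly convergent sequence of bounded continuous functions and a vaguely convergent sequence of Radon measures converges vaguely to the product. These are standard but require the local uniform bound on $\abs{f_\eps'}$; this is precisely where the assumption that $f'$ is a genuine Radon measure (rather than a more general distribution) is used, guaranteeing that $\abs{f'\ast\rho_\eps}\leq \abs{f'}\ast\rho_\eps$ has uniformly locally bounded mass.
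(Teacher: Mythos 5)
Your mollification argument is correct, but it is a genuinely different route from the paper's: the paper does not prove Lemma~\ref{lem:1} directly, it deduces it from the general chain rule for $BV$ functions \cite[Theorem 3.96]{AFP}, applied locally so as to dispense with the Lipschitz hypothesis on $\phi$. Your proof is self-contained and purely one-dimensional: the classical chain rule for the smooth $f_\eps=f\ast\rho_\eps$, the vague convergence $f'\ast\rho_\eps\to f'$, the local mass bound $\abs{f'\ast\rho_\eps}(K)\meg\abs{f'}(K_\eps)$, and the splitting of $\int\chi\,(\phi'\circ f_\eps)\,\dd f_\eps'$ into a term handled by vague convergence against the fixed function $\chi\,(\phi'\circ f)\in C_c(I')$ and an error term controlled by $\norm{\phi'\circ f_\eps-\phi'\circ f}_{\infty}$ are all sound. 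A point worth making explicit is that the continuity of $f$ enters exactly where you use it --- the \emph{uniform} convergence $f_\eps\to f$ on compact sets --- and this is essential: for a discontinuous $BV$ function the same scheme breaks down at that step, consistently with the fact that Vol'pert's chain rule then acquires a jump correction. What the paper's citation buys is brevity and a pointer to the general ($n$-dimensional, merely Lipschitz $\phi$) statement; what your argument buys is a short, transparent, self-contained proof that makes visible the role of each hypothesis ($f$ continuous for uniform convergence, $f'$ a Radon measure for the locally uniform mass bounds and the vague limit).
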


Recall that, for a function $f$ as in the statement of Lemma~\ref{lem:1}, $f(b)-f(a)=f'([a,b])$ for every $a,b\in I$ such that  $a<b$ and $[a,b]\subseteq I$, so that the conclusion of Lemma~\ref{lem:1} may be rephrased saying that $\phi(f(b))-\phi(f(a))= \int_a^b (\phi'\circ f)\,\dd f'$ for every $a,b\in I$ such that $a<b$ and $[a,b]\subseteq I$. In particular, saying that $f$ is continuous is essentially  equivalent to saying that $f'$ is diffuse.

Notice that~\cite[Theorem 3.96]{AFP} requires $\phi$ to be Lipschitz since the authors want to deduce that the measure $(\phi\circ f)'$ is bounded (assuming $f'$   bounded as well). If we do not require this fact and apply the cited result locally, we obtain Lemma~\ref{lem:1}.

We may now pass to the proof of Theorem~\ref{teo:1}.

\begin{proof}[Proof of Theorem~\ref{teo:1}.]
	Throughout the proof, we shall assume that $p,q\in (1,\infty)$. We shall leave to the reader the simple modifications which are needed to deal with the remaining cases.  We shall set   $C_B= \min((p')^{1/p'} p^{1/q}, (q')^{1/p'} q^{1/q} )$. 
	
	\textsc{Step I.} Assume that: $B$ is finite;   $X=\R$; $\Mf$ is the Borel $\sigma$-algebra of $\R$; $N(r)=r$ for every $r\in \R$; $\mi_1,\mi_2$ are diffuse Radon measures with compact support on $\R$; $\mi_3=\mi_1$ (so that $\psi=1$).  Set $h(r)\coloneqq \mi_1((-\infty,r))^{1/(pp')}$ for every $R\in\R$. Observe that~\eqref{eq:1} holds trivially with $C=0$ if $\mi_1=0$, that is, if $h=0$, so that we may assume that $h\neq 0$. Let $r_1$ be the maximum of the $r$ such that $h(r)=0$.  Analogously, we may assume that $\mi_2\neq 0$, in which case we define $r_2$ as the minimum of the $r$ such that $\mi_2((r,+\infty))=0$. We may assume that $r_1<r_2$, for otherwise the assertion is trivial.
	
	Then, take a positive $\Mf$-measurable function $f$ on $\R$, and observe that, by H\"older's and Minkowski's integral inequality,
	\[
	\begin{split}
		\left(\int_\R \left( \int_{-\infty}^r f\,\dd \mi_1  \right)^q \,\dd \mi_2(r) \right)^{p/q}&\meg \left(\int_{r_1}^{r_2} \left( \int_{r_1}^r (fh)^p\,\dd \mi_1  \right)^{q/p} \left( \int_{r_1}^r h^{-p'}\,\dd \mi_1  \right)^{q/p'}  \,\dd \mi_2(r) \right)^{p/q}\\
		&\meg\int_{r_1}^{r_2} (f h)^p(s) \left( \int_s^{r_2} \left( \int_{r_1}^r h^{-p'}\,\dd \mi_1  \right)^{q/p'}  \,\dd \mi_2(r)\right)^{p/q}\,\dd \mi_1(s). 
	\end{split}
	\]
	Then, observe that, for every $r> r_1$,
	\[
	\int_{r_1}^r h^{-p'}\,\dd \mi_1=\int_{r_1}^r \mi_1((r_1,s))^{-1/p}\,\dd \mi_1(s)= p' \mi_1((r_1,r))^{1/p'}\meg B p' \mi_2([r,+\infty))^{-1/q}
	\]
	by Lemma~\ref{lem:1}. 
	In addition,
	\[
	\begin{split}
		\int_s^{r_2} \left( \int_{r_1}^r h^{-p'}\,\dd \mi_1  \right)^{q/p'}  \,\dd \mi_2(r)&\meg (B p')^{q/p'} \int_s^{r_2}  \mi_2((r,r_2))^{-1/p'}  \,\dd \mi_2(r)\\
		&=(B p')^{q/p'} p \mi_2((s,r_2))^{1/p}
	\end{split}
	\]
	again by Lemma~\ref{lem:1}. Thus,
	\[
	\int_{r_1}^{r_2} (f h)^p(s) \left( \int_s^{r_2} \left( \int_{r_1}^r h^{-p'}\,\dd \mi_1  \right)^{q/p'}  \,\dd \mi_2(r)\right)^{p/q}\,\dd \mi_1(s)\meg (B p')^{p/p'} p^{p/q} B \int_\R f^p\,\dd \mi_1
	\]
	by the definition of $B$. Thus,~\eqref{eq:1} holds for every positive $\Mf$-measurable function $f$ on $X$ with $C= (p')^{1/p'} p^{1/q} B$.
	
	\textsc{Step II.} Assume that: $B$ is finite;  $X=\R$; $\Mf$ is the Borel $\sigma$-algebra of $\R$; $N(r)=r$ for every $r\in\R$; $\mi_1,\mi_2$ are diffuse Radon measures with compact support on $\R$; $\mi_3=\mi_1$. We proceed by duality, using~\textsc{step I}.  Define $\Gc$ as the set of positive $\Mf$-measurable functions $g$ on $\R$ such that $\norm{g}_{L^{q'}(\mi_2)}\meg 1$.
	Take a positive $p$-th power  $\mi_1$-integrable function $f$ on $\R$, and observe that
	\[
	\begin{split}
	\left(\left(\int_{-\infty}^r f\,\dd \mi_1  \right)^q \,\dd \mi_2(r)\right)^{1/q}&= \sup_{g\in \Gc} \int_\R \int_{-\infty}^r f\,\dd \mi_1 	\,g(r)\,\dd \mi_2(r)\\
		&=\sup_{g\in \Gc} \int_{\R} \int_{s}^{+\infty} g\,\dd \mi_2\, f(s)\,\dd \mi_1(s)\\
		&\meg \sup_{g\in \Gc} \left(\int_{\R}\left(\int_{s}^{+\infty} g\,\dd \mi_2\right)^{p'}\,\dd \mi_1(s)\right)^{1/p'}\left(\int_{\R} f^{p}\,\dd \mi_1\right)^{1/p}\\
		&\meg (q')^{1/p'} q^{1/q} B\left(\int_\R f^p\,\dd \mi_1\right)^{1/p},
	\end{split}
	\]
	where the first equality follows from the fact that $\mi_2$ is bounded, the second equality follows from Tonelli's theorem, since both $\mi_1$ and $\mi_2$ are bounded, the first inequality follows from H\"older's inequality, and the last inequality follows from~\textsc{step I}, applied to $\check\mi_2$, $\check \mi_1$, $\check g$, $q'$, and $p'$ instead of $\mi_1$, $\mi_2$, $f$, $p$, and $q$, where $\check\;$ denotes the reflection $r\mapsto -r$ (taking into account~\eqref{eq:3}). Thus,~\eqref{eq:1} holds for every positive $\Mf$-measurable function $f$ on $\R$ with $C=C_B$ (the case in which $f\not \in L^p(\mi_1)$ being trivial).
	
	\textsc{Step III.} Assume that: $B$ is finite; $X=\R$; $\Mf$ is the Borel $\sigma$-algebra of $\R$; $N(r)=r$ for every $r\in\R$; $\mi_1,\mi_2$ are Radon measures with compact support on $\R$, with $\mi_2$ diffuse; $\mi_3=\mi_1$.  Define 
	\[
	\mi_{1,n}\coloneqq  \mi_1+\sum_{r\in \R} ( 2^{n} \chi_{[r,r+2^{-n}]}\cdot \Hc^1-\delta_r)\mi_1(\Set{r})
	\]
	for every $n\in\N$, so that $(\mi_{1,n})$ is a sequence of diffuse positive Radon measures, and
	\[
	\mi_{1,n}((-\infty,r))=\mi_1((-\infty,r))+\sum_{r'<r} \mi_1(\Set{r'}) (\min(1, 2^n (r-r') )-1)  \meg \mi_1((-\infty,r))
	\]
	for every $r\in\R$ and for every $n\in\N$. Let us prove that $\chi_{(-\infty,r)}\cdot \mi_{1,n}$ converges vaguely (that is, in the weak dual of $C_c(\R)$) to $\chi_{(-\infty,r)}\cdot \mi_1$  for every $r\in\R$. Indeed, for every $\phi\in C_c(\R)$,
	\[
	\begin{split}
	\abs*{\langle \chi_{(-\infty,r)}\cdot\mi_1-\chi_{(-\infty,r)}\cdot \mi_{1,n}, \phi\rangle}\meg \sum_{r'<r} \mi_1(\Set{r'}) \abs*{2^n\int_{r'}^{\min(r,r'+2^{-n})} \phi(r'') \,\dd r''-\phi(r') }.
	\end{split}
	\]
	Now, it is clear that $2^n\int_{r'}^{\min(r,r'+2^{-n})} \phi(r'') \,\dd r''$ converges to $\phi(r') $ for every $r'<r$, that 
	\[
	\abs*{2^n\int_{r'}^{\min(r,r'+2^{-n})} \phi(r'') \,\dd r''-\phi(r')  }\meg 2\norm{\phi}_{L^\infty(\R)}
	\] 
	for every $r'<r$, and that $\sum_{r'\in (-\infty,r)} \mi_1(\Set{r})\meg \mi_1(\R)<\infty $. Then, the dominated convergence theorem shows that
	\[
	\lim_{n\to \infty}\abs*{\langle \chi_{(-\infty,r)}\cdot\mi_1-\chi_{(-\infty,r)}\cdot \mi_{1,n}, \phi\rangle}=0,
	\]
	whence our claim by the arbitrariness of $\phi$ and $r$.	 In particular, taking $r\Meg\max \supp \mi_1+1$, we see that $\mi_{1,n}$ converges vaguely to $\mi_1$.
	
	 Then, for every positive $f\in C_c(\R)$, \textsc{steps I} and~\textsc{II} show that
	\[
	\begin{split}
		\left( \int_\R \left( \int_{(-\infty,r)} f\,\dd \mi_1\right) ^q \,\dd \mi_2(r)\right)^{1/q}&\meg \liminf_{n\to \infty }\left( \int_\R \left( \int_{(-\infty,r)} f\,\dd \mi_{1,n}\right) ^q \,\dd \mi_2(r)\right)^{1/q}\\
			&\meg C_B  \liminf_{n\to \infty }\left( \int_\R f^p\,\dd \mi_{1,n}\right)^{1/p}\\
			&=C_B \left( \int_\R f^p\,\dd \mi_{1}\right)^{1/p}
	\end{split}
	\]
	by Fatou's lemma. Thus,~\eqref{eq:1} holds for $f$.	
	Now, if $f\colon \R\to [0,+\infty]$ is  lower semi-continuous, then $f$ is the pointwise limit of an increasing sequence of elements of $C_c(\R)$, so that~\eqref{eq:1} is established for $f$ by monotone convergence. Analogously, if $f$ is a $p$-th power $\mi_1$-integrable positive function, then there is a decresing sequence of positive $p$-th power $\mi_1$-integrable lower semicontinuous functions which converge $\mi_1$-almost everywhere to $f$, so that~\eqref{eq:1} is established for $f$ by monotone convergence and Fatou's lemma. Then~\eqref{eq:1}   holds for every positive $\Mf$-measurable function $f$ on $\R$ (the case in which $f\not \in L^p(\mi_1)$ being trivial).
	
	\textsc{Step IV.}  Assume that: $B$ is finite; $X=\R$; $\Mf$ is the Borel $\sigma$-algebra of $\R$; $N(r)=r$ for every $r\in\R$; $\mi_1,\mi_2$ are Radon measures with compact support on $\R$; $\mi_3=\mi_1$.   Define 
	\[
	\mi_{2,n}\coloneqq \mi_2+\sum_{r\in \R} (2^n \chi_{[r-2^{-n},r]}\cdot \Hc^1-\delta_r)\mi_2(\Set{r}),
	\]
	so that $\mi_{2,n}$ is a sequence of diffuse positive Radon measures such that $\mi_{2,n}([r,+\infty))\meg \mi_2([r,+\infty))$ for every $r\in\R$ and for every $n\in\N$, and such that $\int_\R f\,\dd \mi_{2,n}$ converges to $\int_\R f\,\dd \mi_2$ for every positive \emph{left-continuous} function $f$ on $\R$ (the proof is analogous to that of~\textsc{step III}). Now, observe that, for every positive $p$-th power $\mi_1$-integrable function $f$ on $\R$, the increasing function $r \mapsto \int_{(-\infty,r)} f\,\dd \mi_1$ is left-continuous and bounded (by $\mi_1(\R)^{1/p'} \norm{f}_{L^p(\mi_1)}$). Thus, by means of \textsc{step III} we see that
	\[
	\begin{split}
	\left( \int_\R \left( \int_{(-\infty,r)} f\,\dd \mi_1\right) ^q \,\dd \mi_2(r)\right)^{1/q}&= \lim_{n\to \infty }\left( \int_\R \left( \int_{(-\infty,r)} f\,\dd \mi_{1}\right) ^q \,\dd \mi_{2,n}(r)\right)^{1/q}\\
	&\meg C_B  \left( \int_X f^p\,\dd \mi_{1}\right)^{1/p}.
	\end{split}
	\]
	Thus,~\eqref{eq:1} holds for every positive $\Mf$-measurable function $f$ on $\R$ (the case in which $f\not \in L^p(\mi_1)$ being trivial).
	
	\textsc{Step V.} Assume that: $B$ is finite; $\mi_1,\mi_2$ are bounded; $\mi_1=\mi_3$;  $N_*(\mi_1)$ and $\mi_2$ are compactly supported. Notice that every bounded Borel measure on $\R$ is a Radon measure, so that both $N_*(\mi_1)$ and $\mi_2$ are Radon measures. 
	We proceed by duality as in~\textsc{step II}.  Define $\Gc$ as the set of positive Borel measurable functions $g$ on $\R$ such that $\norm{g}_{L^{q'}(\mi_2)}\meg 1$.
	Take a positive $f\in L^p(\mi_1)$, and observe that
	\[
	\begin{split}
		\left(\left(\int_{N(x)<r} f(x)\,\dd \mi_1(x)  \right)^q \,\dd \mi_2(r)\right)^{1/q}&= \sup_{g\in \Gc} \int_\R \int_{N(x)<r} f(x)\,\dd \mi_1(x) 	\,g(r)\,\dd \mi_2(r)\\
		&=\sup_{g\in \Gc} \int_{X} \int_{N(x)}^{+\infty} g\,\dd \mi_2\, f(x)\,\dd \mi_1(x)\\
		&\meg \sup_{g\in \Gc} \left(\int_{X}\left(\int_{N(x)}^{+\infty} g\,\dd \mi_2\right)^{p'}\,\dd \mi_1(x)\right)^{1/p'}\left(\int_{X} f^{p}\,\dd \mi_1\right)^{1/p}\\
		&=\sup_{g\in \Gc} \left(\int_{\R}\left(\int_{s}^{+\infty} g\,\dd \mi_2\right)^{p'}\,\dd N_*(\mi_1)(s)\right)^{1/p'}\left(\int_{X} f^{p}\,\dd \mi_1\right)^{1/p}\\
		&\meg C \int_\R f^p\,\dd \mi_1,
	\end{split}
	\]
	where each step is justified as in~\textsc{step II}, except for the last equality, which holds since the function $s\mapsto \int_{s}^{+\infty} g\,\dd \mi_2$ is decreasing, hence Borel measurable. Thus,~\eqref{eq:1} holds for every positive $\Mf$-measurable function $f$ on $X$ (the case in which $f\not \in L^p(\mi_1)$ being trivial).

	\textsc{Step VI.} Assume that $B$ is finite. 
	If $ \mi_1(N^{-1}((-\infty,r)))=0$ for some $r\in \R$, let $r_1$ be the maximum of these $r$. Set $r_1\coloneqq -\infty$ otherwise. Analogously, if $\mi_2([r,+\infty))=0$ for some $r\in \R$, let $r_2$ be the greatest lower bound of these $r$. Set $r_2\coloneqq +\infty$ otherwise. 
	Then, the finiteness of $B$ implies that, for every $r\in (r_1,r_2) $, both $\mi_2([r,+\infty))$ and $\int_{N(x)< r} \psi(x)^{-p'/p}\,\dd \mi_1(x)$ are finite and non-zero. Notice that, if $r_2\meg r_1$, then~\eqref{eq:1} trivially holds (with $C=0$) for every positive $\Mf$-measurable function $f$ on $X$, since $\int_{N(y)< r} f(y)\,\dd \mi_1=0$ for every $r\in(-\infty,r_2]$, hence for $\mi_2$-almost every $r\in \R$. Then, we may assume that $r_1<r_2$.	
	
	Define two sequences $\mi_{1,n}$ and $\mi_{2,n}$ of bounded Radon measures on $X$ and $\R$, respectively, as follows. Let $(r_{1,n})$ be a strictly decreasing sequence which converges to $r_1-1$, and let $(r_{2,n})$ be a strictly increasing sequence which converges to $r_2$, and define 
	\[
	\mi_{1,n}\coloneqq [(\chi_{[r_{1,n},r_{2,n}]}\circ N) \psi^{-p'/p}]\cdot \mi_1
	\]
	for every $n\in\N$. In addition, let $(r'_{2,n})$ be a strictly increasing sequence which converges to $r_2+1$, and let $(r'_{1,n})$ be a strictly decreasing sequence which converges to $r_1$, and define 
	\[
	\mi_{2,n}\coloneqq \chi_{[r'_{1,n}, r'_{2,n}]}\cdot \mi_2.
	\]
	Then, the preceding considerations show that $\mi_{1,n}$ and $\mi_{2,n}$ are bounded inner regular measures for every $n\in\N$. In addition, $N_*(\mi_{1,n})$ and $\mi_{2,n}$ are compactly supported for every $n\in\N$. Now, take a positive $\Mf$-measurable function $f$ on $X$, so that $f\psi^{p'/p}$ is $\mi_{1,n}$-measurable for every $n\in\N$. Then, by monotone convergence,
	\[
	\begin{split}
		&\left( \int_\R \left( \int_{N(x)<r} f(x)\,\dd \mi_1(x)\right) ^q \,\dd \mi_2(r)\right)^{1/q}\\
			&\qquad=\lim_{n\to \infty} \left( \int_\R \left( \int_{N(x)<r} (f \psi^{p'/p})(x)\,\dd \mi_{1,n}(x)\right) ^q \,\dd \mi_{2}(r)\right)^{1/q}\\
			&\qquad=\lim_{n\to \infty}\lim_{m\to \infty} \left( \int_\R \left( \int_{N(x)<r} (f \psi^{p'/p})(x)\,\dd \mi_{1,n}(x)\right) ^q \,\dd \mi_{2,m}(r)\right)^{1/q}\\
			&\qquad\meg \liminf_{n\to \infty} C_B  \left( \int_X f^p \psi^{p'}\,\dd \mi_{1,n}\right)^{1/p}\\
			&\qquad\meg C_B  \left( \int_X f^p\,\dd (\psi\cdot\mi_1)\right)^{1/p}\\
			&\qquad\meg C_B\left( \int_X f^p\,\dd \mi_3\right)^{1/p}.
	\end{split}
	\]
	Thus,~\eqref{eq:1} holds for every positive $\Mf$-measurable function $f$ on $X$.

	\textsc{Step VII.} Assume that there is a finite constant $C$ such that~\eqref{eq:1} holds for every positive $\Mf$-measurable function $f$ on $X$. Observe that~\eqref{eq:1} holds with $\psi\cdot \mi_1$ in place of $\mi_3$ as one sees taking $f=0$ on a $\mi_1$-negligible Borel subset of $X$ where $\mi_3-\psi\cdot \mi_1$ is concentrated. Therefore, we may assume that $\mi_3=\psi\cdot \mi_1$. Take $r\in\R$. If $E$ is an $\Mf$-measurable subset of $N^{-1}((-\infty,r))$ such that $\mi_3(E)=0$, then applying~\eqref{eq:1} with $f=\chi_E$ we see that
	\[
	\begin{split}
		\mi_1(E)\mi_2([r,+\infty))^{1/q}&\meg \left( \int_\R \left( \int_{N(x)<r} f(x)\,\dd \mi_1(x)\right) ^q \,\dd \mi_2(r)\right)^{1/q}\\
		&\meg C\left(\int_X f^p\,\dd \mi_3\right)^{1/p}=0.
	\end{split}
	\]
	If $\mi_1(E) >0$, this implies that
	\[
	\mi_2([r,+\infty))=0, 
	\]
	so that
	\[
	\mi_2([r,+\infty))^{1/q} \left( \int_{N(x)< r} \psi(x)^{-p'/p}\,\dd \mi_1(x)\right)^{1/p'}=0.
	\]
	Therefore, if either $\mi_1(N^{-1}((-\infty,r)))=0$ or if $N^{-1}((-\infty,r))$ contains an $\Mf$-measurable subset $E$ with $\mi_1(E)>0$ and $\mi_3(E)=0$, then
	\[
	\mi_2([r,+\infty))^{1/q} \left( \int_{N(x)< r} \psi(x)^{-p'/p}\,\dd \mi_1(x)\right)^{1/p'}=0\meg C.
	\]
	Then, assume that   $\mi_1(N^{-1}((-\infty,r)))>0 $ and that $\mi_3(E)>0$ for every $\Mf$-measurable subset $E$ of $N^{-1}((-\infty,r))$ with $\mi_1(E)>0$. In other words, we assume that $\psi(x)>0$ for $\mi_1$-almost every $x\in N^{-1}((-\infty,r))$, and that $\mi_1(N^{-1}((-\infty,r)))>0$.
	 Since $\mi_1$ is semi-finite, also $\psi^{-p'/p}\cdot \mi_1$ is semi-finite (on $N^{-1}((-\infty,r))$), so that $(\psi^{-p'/p}\cdot \mi_1)(N^{-1}((-\infty,r)))$ is the least upper bound of the $(\psi^{-p'/p}\cdot \mi_1)(E)$, as $E$ runs through the set of $\Mf$-measurable subsets of $N^{-1}((-\infty,r))$ such that $0<(\psi^{-p'/p}\cdot \mi_1)(E)<\infty$. Then, fix such an $E$, and choose $f=\chi_E \psi ^{-p'/p}$ in~\eqref{eq:1}, so that
	\[
	\begin{split}
		\mi_2([r,+\infty))^{1/q}\int_{E}  \psi ^{-p'/p}\,\dd \mi_1 &\meg \left( \int_\R \left( \int_{N(x)<r} f(x)\,\dd \mi_1(x)\right) ^q \,\dd \mi_2(r)\right)^{1/q}\\
		&\meg C  \left( \int_E \psi^{1-p'}\,\dd \mi_1 \right)^{1/p}\\
		&=C  \left( \int_E \psi^{-p'/p}\,\dd \mi_1 \right)^{1/p}.
	\end{split}
	\]
	Therefore,
	\[
	\mi_2([r,+\infty))^{1/q} \left( \int_{E} \psi^{-p'/p}\,\dd \mi_1\right)^{1/p'}\meg C.
	\]
	The arbitrariness of $E$ then shows that
	\[
	\mi_2([r,+\infty))^{1/q} \left( \int_{N(x)< r} \psi(x)^{-p'/p}\,\dd \mi_1(x)\right)^{1/p'}\meg C.
	\] 
	By the arbitrariness of $r$, we have thus proved that $B\meg C$.
\end{proof}

\begin{lem}\label{lem:2}
	Let $f,g\colon \R\to [0,+\infty)$ be two functions, with $f$ decreasing and left-continuous, $g$ increasing and right-continuous, and $f g$ compactly supported. Then,
	\[
	\lim_{\eps\to 0^+} \sup_{r\in \R} f(r-\eps) g(r)=\sup_{r\in \R} f(r) g(r).
	\]
\end{lem}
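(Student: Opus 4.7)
Let $S\coloneqq \sup_{r\in\R} f(r) g(r)$ and $S_\eps\coloneqq \sup_{r\in\R} f(r-\eps) g(r)$. I would start with the easy direction: since $f$ is decreasing, $f(r-\eps)\Meg f(r)$, so $S_\eps\Meg S$; the same monotonicity shows $S_\eps$ is increasing in $\eps>0$. Consequently $L\coloneqq \lim_{\eps\to 0^+} S_\eps = \inf_{\eps>0} S_\eps$ exists and satisfies $L\Meg S$. The task thus reduces to proving $L\meg S$, which I would handle by contradiction: assume $L>S$ and pick $c\in(S,L)$, so that for every $n\in\N$ there is $r_n\in\R$ with $f(r_n-1/n) g(r_n)>c$; in particular both factors are strictly positive.

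Next I would exploit the compact support of $fg$ to confine $(r_n)$ to a bounded interval. Setting $r^*\coloneqq \sup\Set{r : f(r)>0}$ and $r_*\coloneqq \inf\Set{r : g(r)>0}$ (and dismissing the trivial case $fg\equiv 0$ separately), a short monotonicity argument together with the compact support of $fg$ forces $-\infty<r_*\meg r^*<+\infty$. The relations $f(r_n-1/n)>0$ and $g(r_n)>0$ then give $r_*\meg r_n<r^*+1/n$, so $(r_n)$ is bounded; passing to a subsequence I may assume $r_n\to r_0\in[r_*,r^*]$.

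Finally, the conclusion will follow from a case analysis on the way $r_n$ approaches $r_0$, using the one-sided continuity hypotheses. If $r_n\Meg r_0$ along a subsequence, right-continuity of $g$ gives $g(r_n)\to g(r_0)$; and whether $r_n-1/n<r_0$ infinitely often (in which case left-continuity of $f$ yields $f(r_n-1/n)\to f(r_0)$) or $r_n-1/n\Meg r_0$ infinitely often (in which case $f(r_n-1/n)\meg f(r_0)$ by monotonicity), one obtains $\limsup f(r_n-1/n) g(r_n)\meg f(r_0) g(r_0)\meg S$. If instead $r_n<r_0$ along a subsequence, then $r_n-1/n\to r_0$ from below, so $f(r_n-1/n)\to f(r_0)$ by left-continuity while $g(r_n)\to \lim_{r\to r_0^-} g(r)$; the limit $f(r_0)\lim_{r\to r_0^-} g(r)$ is dominated by $\lim_{r\to r_0^-} (fg)(r)\meg S$. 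Either way one contradicts $f(r_n-1/n) g(r_n)>c>S$, so $L=S$. The only real obstacle I anticipate is the bookkeeping in the case analysis: making sure one-sided continuity is applied on the correct side at each step, and that the pathological configurations (jump points of $f$ or $g$, coincidences such as $r_*=r^*$) are all covered.
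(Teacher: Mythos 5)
Your argument is correct and takes essentially the same route as the paper's proof: both get the easy inequality from the monotonicity/left-continuity of $f$, then extract near-maximizers $r_n$ for $\eps_n\to 0^+$, confine them to a compact interval via the support hypothesis, pass to a convergent subsequence, and finish with a case analysis on the side from which $r_n$ approaches its limit, applying left-continuity of $f$ and right-continuity of $g$ on the appropriate sides. The only point deserving a word is the case $fg\equiv 0$ with $f,g\neq 0$ and $r_*=r^*$, which you set aside as trivial but which actually needs the same one-line limiting argument ($S_\eps\meg f(r^*-\eps)\,g(r^*+\eps)\to f(r^*)g(r^*)=0$); this is harmless and does not affect the proof.
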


\begin{proof}
	We may assume that $f,g\neq 0$, for otherwise the assertion is trivial.
	Take $a,b\in \R$ so that $f(b-\eps)>0=f(b+\eps)$ $g(a-\eps)=0<g(b+\eps)$ for every $\eps>0$: this is possible since$f,g\neq 0$ and $f g$ has compact support. In addition, the assertion is trivial if $b<a$, so that we may assume that $a\meg b$. Observe that, since $f(r-\eps)\to f(r)$ for every $r\in \R$, and for $\eps\to 0^+$, thanks to the left continuity of $f$, it is clear that
	\[
	\liminf_{\eps\to 0^+} \sup_{r\in \R} f(r-\eps) g(r)\Meg\sup_{r\in \R} f(r) g(r).
	\]
	Conversely, let $(\eps_k)$ be a sequence in $(0,+\infty)$ such that $\eps_k\to 0$ and 
	\[
	\limsup_{\eps\to 0^+} \sup_{r\in \R} f(r-\eps) g(r)=\lim_k \sup_{r\in \R} f(r-\eps_k) g(r).
	\]
	Take $\eta>0$. For every $k\in\N$, take $r_k\in [a,b+\eps_k]$ so that $\sup_{r\in \R} f(r-\eps_k) g(r)\meg f(r_k-\eps_k)g(r_k)+\eta$. Up to a subsequence, we may then assume that $(r_k)$ is a monotone sequence; let $\bar r\in [a,b]$ be its limit. If $(r_k)$ is increasing, then, $f(r_k-\eps_k)\to f(\bar r)$ since $f$ is left-continuous, and $g(r_k)\meg g(\bar r)$ since $g$ is increasing, so that
	\[
	\limsup_{\eps\to 0^+} \sup_{r\in \R} f(r-\eps) g(r)\meg \limsup_{k\to \infty} f(r_k-\eps_k) g(r_k)+\eta\meg   f(\bar r) g(\bar r)+\eta \meg \sup_{r\in \R} f(r) g(r)+\eta.
	\]
	If, otherwise, $(r_k)$ is decreasing, then $f(r_k-\eps_k)\meg f(\bar r-\eps_k)\to f(\bar r)$ since $f$ is decreasing and left-continuous, and $g(r_k)\to g(\bar r)$ since $g$ is right-continuous, so that
	\[
	\limsup_{\eps\to 0^+} \sup_{r\in \R} f(r-\eps) g(r)\meg \limsup_{k\to \infty} f(r_k-\eps_k) g(r_k)+\eta\meg f(\bar r) g(\bar r)+\eta \meg \sup_{r\in \R} f(r) g(r)+\eta.
	\]
	By the arbitrariness of $\eta$, this implies that 
	\[
	\limsup_{\eps\to 0^+} \sup_{r\in \R} f(r-\eps) g(r)  \meg \sup_{r\in \R} f(r) g(r),
	\]
	whence the result.
\end{proof}

\begin{proof}[Proof of Theorem~\ref{teo:1bis}]
	As in the proof of Thereom~\ref{teo:1}, we shall assume, for simplicity, $p,q\in (1,\infty)$, and leave to the reader the simple modifications needed to deal with the remaining cases.
	
	\textsc{Step I.} Assume first that: $B'$ is finite; $X=\R$; $\Mf$ is the Borel $\sigma$-algebra of $\R$; $N(r)=r$ for every $r\in \R$; $\mi_1$ and $\mi_2$ are bounded and compactly supported; $\mi_1=\mi_3$. Then, the assertion follows from Theorem~\ref{teo:1} if $\mi_1(\Set{r})\mi_2(\Set{r})=0$ for every $r\in \R$, since, in this case,	for every positive $\Mf$-measurable function $f$ on $\R$, one has $\int_{(-\infty,r)} f\,\dd \mi_1= \int_{(-\infty,r]} f\,\dd \mi_1$ for $\mi_2$-almost every $r\in\R$, and
	\[
	B'=\sup_{r\in \R} \mi_2([r,+\infty))^{1/q} \mi_1((-\infty,r))^{1/p'}
	\]
	since $\mi_2([\,\cdot\,,+\infty))$ is continuous at $r$ for every $r\in\R$ such that $\mi_1(\Set{r})>0$. In the general case, observe that, since both $\mi_1$ and $\mi_2$ have at most countably many atoms,  there is a sequence $(\eps_k)_{k\in\N}$ of elements of $(0,+\infty)$ such that $\eps_k\to 0$ and such that $\mi_1(\Set{r})\mi_2(\Set{r-\eps_k})=0$ for every $r\in\R$. Then, by Lemma~\ref{lem:2}
	\[
	B'=\lim_{k\to \infty} \sup_{r\in \R} \mi_2([r-\eps_k,+\infty))^{1/q} \mi_1((-\infty,r])^{1/p'},
	\]
	so that, for every positive $\Mf$-measurable function $f$ on $\R$,
	\[
	\begin{split}
		\left(\int_\R \left( \int_{(-\infty,r]} f\,\dd \mi_1 \right)^q\,\dd \mi_2(r)\right)^{1/q}&\meg\liminf_{k\to \infty} \left(\int_\R \left( \int_{(-\infty,r+\eps_k]} f\,\dd \mi_1 \right)^q\,\dd \mi_2(r)\right)^{1/q}\\
			&\meg C'\left(\int_\R f^p\,\dd \mi_1 \right)^{1/p}
	\end{split}
	\]
	by the right-continuity of the function $r\mapsto  \int_{(-\infty,r]} f\,\dd \mi_1 $, Fatou's lemma, and the previous remarks, with $C'=\min((p')^{1/p'} p^{1/q}, (q')^{1/p'} q^{1/q} ) B'$.
	Thus,~\eqref{eq:1bis} holds for every positive $\Mf$-measurable function $f$ on $\R$.
	
	\textsc{Step II.} Now, assume that $B'$ is finite, and let us prove that~\eqref{eq:1bis} holds for every positive $\Mf$-measurable function $f$ on $X$, with $C'=\min((p')^{1/p'} p^{1/q}, (q')^{1/p'} q^{1/q} ) B'$. The proof proceeds as in~\textsc{steps V} and \textsc{VI} of the proof of Theorem~\ref{teo:1}, with the following modification: in the proof of~\textsc{step VI}, one chooses $r_{2,n}=r_2$ if $r_2\in \R$ and $\mi_2(\Set{r_2})>0$ (in which case the finiteness of $B'$ implies that $\mi_1(N^{-1}((-\infty,r_2])))$ is finite), and, analogously, $r'_{1,n}=r_1$ if $r_1\in \R$ and $\mi_1(N^{-1}(\Set{r_1}))>0$ (in which case the finiteness of $B'$ implies that $\mi_2([r_1,+\infty))$ is finite).
	
	\textsc{Step III.} Finally, assume that there is a finite $C'$ for which~\eqref{eq:1bis} holds for every positive $\Mf$-measurable function $f$ on $X$, and let us prove that $B'\meg C'$. The proof proceeds as in~\textsc{step VII} of the proof of Theorem~\ref{teo:1}, simply replacing $(-\infty,r)$ with $(-\infty,r]$ (and $N(x)<r$ with $N(x)\meg r$) everywhere.
\end{proof}

\end{document}